\theoremstyle{plain}
\newtheorem{thm}{Theorem}
\newtheorem{lem}[thm]{Lemma}
\theoremstyle{definition}
\theoremstyle{remark}
\colorlet{grayred}{yellow!30!brown}
\colorlet{graygreen}{yellow!70!brown}
\colorlet{darkgreen}{black!45!green}
\renewcommand{\ge}{\geqslant}
\title{An aperiodic subtraction game of Nim-dimension two}
\author{Urban Larsson}
\address{Urban Larsson, Department of Mathematics and Statistics, Dalhousie University, 6316 Coburg Road, PO Box 15000, Halifax, Nova Scotia, Canada B3H 4R2, supported by the Killam Trust}
\email{urban031@gmail.com}
\date{March 18, 2015}
\begin{document}

\maketitle
\begin{abstract}
In a recent arXiv-manuscript Fox studies infinite subtraction games with a finite (ternary) and aperiodic Sprague-Grundy function. Here we provide an elementary example of a game with the given properties, namely the game given by the subtraction set $\{F_{2n+1}-1\}$, where $F_i$ is the $i$th Fibonacci number, and where $n$ ranges over the positive integers. Our definition of nim-dimension reflects the precise number of power-of-two-components generated by the games; the group of nim-values is of order four so the dimension is two (in the classical definition this dimension would have been one). Thanks to Carlos Santos for an enlightening discussion on this matter.
\end{abstract}

\section*{A simple solution to a problem about aperiodic subtraction games suggested by Nathan Fox}

In a recent arXiv-manuscript \cite{F} Nathan Fox studies infinite and aperiodic subtraction games \cite{B} with a finite (ternary) Sprague-Grundy function \cite{S,G}. In this note we provide an elementary example of a game with the given properties. In particular, this means our game has nim-dimension two\footnote{It is the number of power-of-two-components that defines the group of nim-values generated by the games; this group is of order four so the dimension is two. In the classical definition \cite{Sa} this dimension would have been one.}.
 Let $\phi=\frac{1+\sqrt{5}}{2}$ denote the Golden ratio. Let $S=\{F_{2n+1}-1\}=\{1,4,12,\ldots \}$, where $n$ ranges over the positive integers, and where $F_i$ is the $i$th Fibonacci number.
\begin{lem}\label{lem1}
The sets $B=\{\lfloor n\phi^2 \rfloor \}_{n\geqslant 1}$, $B+1=\{\lfloor n\phi^2 \rfloor +1 \}_{n\geqslant 0}$, and $AB + 1=\{2\lfloor n\phi \rfloor + n+1 \}_{n\geqslant 1}$ partition the positive integers. 
\end{lem}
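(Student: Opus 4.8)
The plan is to work with the Beatty sequences attached to the Golden ratio. Recall that $\phi$ and $\phi^2$ are irrational with $\frac{1}{\phi} + \frac{1}{\phi^2} = 1$, so by Beatty's theorem the sequences $A = \{\lfloor n\phi\rfloor\}_{n\ge 1}$ and $B = \{\lfloor n\phi^2\rfloor\}_{n\ge 1}$ partition the positive integers (these are the Wythoff sequences). I will take this classical fact as the starting point. The claim to be proved is that a different triple of sequences, namely $B$, $B+1$, and $AB+1 := \{2\lfloor n\phi\rfloor + n + 1\}_{n\ge 1}$, also partitions $\mathbb{Z}_{\ge 1}$.

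**Translating the third sequence.**

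First I would rewrite the third sequence in a more transparent form. Since $\phi^2 = \phi + 1$, we have $\lfloor n\phi^2\rfloor = \lfloor n\phi\rfloor + n$, so $B = \{\lfloor n\phi\rfloor + n\}_{n\ge 1}$; and using $2\lfloor n\phi\rfloor + n = \lfloor n\phi\rfloor + (\lfloor n\phi\rfloor + n) = a_n + b_n$ where $a_n = \lfloor n\phi\rfloor$, $b_n = \lfloor n\phi^2\rfloor$, the third sequence becomes $\{a_n + b_n + 1\}_{n\ge 1}$. This is (one plus) the classical identity that $a_n + b_n = b_{a_n}$ — equivalently $A B$ really is the composition $n\mapsto b_{a_n}$, which is the increasing enumeration of $B$ restricted to indices in $A$. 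So $AB+1$ is the set $\{b_m + 1 : m \in A\}$, while $B+1$ restricted to the complementary indices is $\{b_m + 1 : m \in B\}$ — but one must be careful, since $B+1$ as written is indexed by $n \ge 0$, i.e. it includes $1 = \lfloor 0\cdot\phi^2\rfloor + 1$ and then $\{b_m + 1 : m\ge 1\}$.

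**The counting/partition argument.**

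With these reformulations the proof structure is: (i) $B = \{b_n\}$; (ii) $B + 1 = \{1\} \cup \{b_n + 1\}_{n\ge 1}$; (iii) $AB+1 = \{b_{a_n}+1\}_{n\ge 1} = \{b_m + 1 : m \in A\}$. Now the set of all "$b$-values plus one," namely $\{b_m+1 : m\ge 1\}$, splits according to whether the index $m$ lies in $A$ or in $B$ (a genuine dichotomy by Beatty), giving $\{b_m+1:m\ge 1\} = (AB+1) \sqcup \{b_{b_n}+1 : n\ge 1\}$. I then need the identity $b_{b_n} = b_n + a_n$ — wait, more usefully $b_{b_n}+1$ should match up with elements of $B$ itself: indeed since $b_n - 1 \notin B$ is false in general, the right bookkeeping is that a positive integer $k$ is either in $B$, or $k-1 \in B$ (then $k \in B+1$), unless $k = 1$; one shows every integer is covered exactly once by checking that consecutive elements of $B$ differ by $2$ or $3$ (because $b_{n+1} - b_n = \lfloor(n+1)\phi^2\rfloor - \lfloor n\phi^2\rfloor \in \{2,3\}$, as $2 < \phi^2 < 3$), so the gaps of $B\cup(B+1)$ have length $1$ (when the $B$-gap is $2$) or a hole of size $1$ remains (when the $B$-gap is $3$), and the holes are precisely filled by $AB+1$.

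**Main obstacle.**

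The hard part will be the last bookkeeping step: showing that the "holes" left by $B \cup (B+1)$ — which occur exactly at the indices $n$ where $b_{n+1} - b_n = 3$ — are enumerated, without repetition and without omission, by $n \mapsto 2a_n + n + 1$. Concretely, I expect to prove that $b_{n+1}-b_n = 3$ if and only if $n \in A$ (equivalently $n = a_k$ for some $k$), and that in that case the hole is the integer $b_n + 2 = a_n + n + 2$; then matching $a_n + n + 2$ against $2a_m + m + 1$ for the appropriate reindexing $m$. The cleanest route is probably to avoid the case analysis on gap-sizes altogether and instead run a direct density/injectivity argument: show the three sequences are pairwise disjoint (using Beatty to rule out collisions between $B$, $B+1$ and the third set) and that their counting functions sum to $N + O(1)$ as we look at $\{1,\dots,N\}$, forcing a partition. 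That converts the problem into three asymptotic counts — $|B \cap [1,N]| \sim N/\phi^2$, $|(B+1)\cap[1,N]| \sim N/\phi^2$, $|(AB+1)\cap[1,N]| \sim N/\phi^3$ — together with $\frac{2}{\phi^2} + \frac{1}{\phi^3} = 1$, which is a short computation; the disjointness is then the only place real care is needed, and it reduces to the fractional-part inequalities $\{n\phi^2\} \ne \{m\phi^2\}$-type statements that are standard for irrational Beatty sequences.
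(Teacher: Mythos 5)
Your central reformulation is off by one, and this propagates through the whole plan. Writing $a_n=\lfloor n\phi\rfloor$, $b_n=\lfloor n\phi^2\rfloor=a_n+n$, the correct Wythoff composition identity is $a_n+b_n=a_{b_n}=b_{a_n}+1$, \emph{not} $a_n+b_n=b_{a_n}$. Consequently your step (iii), $AB+1=\{b_m+1:m\in A\}$, is false: that set (its first element is $b_1+1=3$) is evidently a subset of $B+1$, so if it equalled $AB+1$ the three sets could not be pairwise disjoint, contradicting the very statement being proved; correctly $AB+1=\{a_{b_n}+1\}_{n\ge1}=\{b_{a_n}+2\}_{n\ge1}$, whose first element is $4$. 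Your gap-filling outline (consecutive gaps of $B$ are $2$ or $3$; a hole of $B\cup(B+1)$ appears exactly when the gap is $3$; those holes should be exactly $AB+1$) is sound in spirit, but its two crucial ingredients are left as expectations rather than proved: that $b_{n+1}-b_n=3$ precisely when $n\in A$, and that the hole $b_{a_k}+2$ equals $2a_k+k+1$, which is precisely the identity $a_{a_k}=b_k-1$ (equivalently $b_{a_k}=a_k+b_k-1$) that you misquoted. Without these two facts the argument does not go through.

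The proposed ``cleanest route'' does not close the gap either: pairwise disjointness together with counting functions summing to $N+O(1)$ (or densities summing to $1$) does \emph{not} force a partition --- delete finitely many elements from any one of the three sets and both properties persist while the covering fails. You would need the exact identity that the three counts sum to $N$ for every $N$, which amounts to essentially the same work as the gap argument (and the disjointness itself already needs the composition identities above, not just fractional-part estimates). For comparison, the paper argues differently and more briefly: by Wythoff's/Beatty's theorem it suffices to show that $B+1$ and $AB+1$ partition $A=\{a_n\}$, and this is done through Zeckendorf representations, using that $x\in A$ if and only if the smallest Fibonacci index in the Zeckendorf expansion of $x$ is even. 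Your plan is repairable --- prove the two Wythoff facts, or establish exact counting for all $N$ --- but as written it rests on a false identity and an invalid concluding step.
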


\begin{proof} 
By \cite{W}, it suffices to prove that the sets $B+1$ and $AB+1$ partition the set $A=\{\lfloor n\phi \rfloor\}_{n\geqslant 1}$.  It is well known that $x\in A$ if and only if the smallest Fibonacci term in the Zeckendorf expansion of $x$ has an even index e.g. \cite{S}. Let $z_i$ denote the $i$th smallest index of a Fibonacci term in the representation of numbers in the respective sequences. For $AB+1$, $z_1 = 2$ (hence in $A$) since it has all representatives with $z_2\geqslant 4$ even. We know that $B+1\subset A$, contains all representatives with $z_1\geqslant 4$ even, since $B$ contains all representatives with $z_1=3$ (since $F_4=F_3+1$, $F_6= F_5+F_3+1$ and so on). It also contains all representatives with $z_2\geqslant 5$ odd and $z_1=2$, and it also contains the representative with just $z_1$.
\end{proof}

Note that because the Golden ratio is an irrational number, the sets in Lemma~\ref{lem1} are aperiodic (in fact they follow a beautiful fractal pattern \cite{L} related to the Fibonacci morphism).
The two-player subtraction game $S$ is played as follows. The players alternate in moving. From a given nonnegative integer $x$, the current player moves to a new integer of the form $x-s\ge 0$, where $s\in S$. A player unable to move, because no number in $s$ satisfies the inequality, loses. 

The Sprague-Grundy value of an impartial game is computed recursively as the least nonnegative integer not in the set of values of the move options, and starting with the terminal position(s) which have Sprague-Grundy value zero.

\begin{thm} 
The Sprague-Grundy value of the subtraction game $S$ is $g(x)=0$ if $x\in B, g(x)=1$ if $x\in B+1$ and $g(x)=2$ if $x\in AB+1$. 
\end{thm}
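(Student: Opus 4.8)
The plan is to verify directly that the function $g$ defined in the statement obeys its recursive definition, using the partition of the positive integers supplied by Lemma~\ref{lem1}. Adjoin the terminal position and set $\mathcal{P}=B\cup\{0\}$, the set where $g$ is to be $0$. Since every option $x-s$ of a position $x$ is strictly smaller than $x$, it suffices to check, for each $x>0$: (i) if $x\in B$, no option lies in $\mathcal{P}$; (ii) if $x\in B+1$, some option lies in $\mathcal{P}$ and none in $B+1$; (iii) if $x\in AB+1$, some option lies in $\mathcal{P}$, some in $B+1$, and none in $AB+1$. I would split these into ``non-return'' statements (an option cannot stay in the same class, and from $B$ cannot reach $0$) and ``reachability'' statements (the required smaller values do occur among the options).

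For the non-return part, note that if $x>x'$ lie in a common class $C$ then $x-x'$ is a positive difference of two elements of $B$ (for $C=B$), of $B\cup\{0\}$ (for $C=B+1$, since $B+1=\{\,b+1:b\in B\cup\{0\}\,\}$), or of $AB$ (for $C=AB+1$), and from $x\in B$ one reaches $0$ only if $x\in S$. So the non-return statements amount to $S\cap B=\emptyset$, $S\cap(B-B)=\emptyset$, $S\cap(AB-AB)=\emptyset$. The key tool is the elementary fact that for $m>n\ge1$ one has $\lfloor m\phi\rfloor-\lfloor n\phi\rfloor\in\{\lfloor(m-n)\phi\rfloor,\lfloor(m-n)\phi\rfloor+1\}$, and the same with $\phi^2$ in place of $\phi$ (both come from $\{(m-n)\phi\}+\{n\phi\}-\{m\phi\}\in\{0,1\}$). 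Since the $n$th elements of $B$ and $AB$ are $\lfloor n\phi^2\rfloor=\lfloor n\phi\rfloor+n$ and $2\lfloor n\phi\rfloor+n$, it follows that $B-B\subseteq B\cup(B+1)$ and $AB-AB\subseteq AB\cup(AB+2)$. Zeckendorf representations now enter: by the classical characterisation used in the proof of Lemma~\ref{lem1}, $B$ consists of the integers whose least Zeckendorf index is odd, $AB$ of those whose least index is even and at least $4$, and $AB+1$ of those with least index $2$ and second-least index even, so the identity $F_{2k+1}-1=F_2+F_4+\cdots+F_{2k}$ shows $S\setminus\{1\}\subseteq AB+1$ while $1\in B+1$. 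Since the three classes are pairwise disjoint, $S$ is then disjoint from $B$, from $AB$, and from $(B+1)\setminus\{1\}$; combined with $AB+2\subseteq B$ (a short induction via the telescoping $F_3+F_4+F_6+\cdots+F_{2j}=F_{2j+1}$) and with the fact that no element of $B-B$ is below $\lfloor\phi^2\rfloor=2$, this yields $S\cap(B-B)=\emptyset$ and $S\cap(AB-AB)=\emptyset$, establishing (i), the ``no option in $B+1$'' half of (ii), and the ``no option in $AB+1$'' half of (iii).

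For reachability, from $x\in B+1$ the move $s=1\in S$ reaches $x-1\in B\cup\{0\}=\mathcal{P}$, and from $x\in AB+1$ the move $s=1$ reaches $x-1\in AB\subseteq B+1$ (a number whose least Zeckendorf index is even and at least $4$ lies in $B+1$). The one substantial point is that from $x\in AB+1$ some move reaches $\mathcal{P}$. Put $y=x+1$; then $y\in AB+2\subseteq B$, and the move by $s=F_m-1\in S$ reaches $y-F_m$, for any odd $m\ge3$ with $F_m\le y$. So it is enough to prove: every $y\in B$ admits an odd $m\ge3$ with $F_m\le y$ and $y-F_m\in B\cup\{0\}$. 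Let $y=F_{z_1}+F_{z_2}+\cdots$ be the Zeckendorf expansion, $z_1$ odd. If there is no $z_2$, take $m=z_1$, reaching $0$. If $z_2$ is odd, take $m=z_1$: then $y-F_m$ has least index $z_2$, again odd, hence lies in $B$. If $z_2$ is even (so $z_2\ge6$ and $z_1\le z_2-3$), take $m=z_2-1$: then $y-F_m=F_{z_1}+F_{z_2-2}+F_{z_3}+\cdots$, which is already a Zeckendorf expansion with odd least index $z_1$, except when $z_1=z_2-3$, in which case the collision $F_{z_2-3}+F_{z_2-2}=F_{z_2-1}$ still leaves an odd least index. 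In every case $y-F_m\in B\cup\{0\}$.

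The conceptual content here is light; the part I expect to cost the most work is the Fibonacci bookkeeping — making the fractional-part argument for $B-B\subseteq B\cup(B+1)$ and $AB-AB\subseteq AB\cup(AB+2)$ airtight, and tracking the consecutive-index collisions in $AB+2\subseteq B$ and in the reachability step. With those in hand, the verification of the $\mathrm{mex}$ recursion is immediate from Lemma~\ref{lem1}.
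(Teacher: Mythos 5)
Your proof is correct, and the overall skeleton coincides with the paper's: induction on $x$, verifying the mex conditions against the partition of Lemma~\ref{lem1} together with the Zeckendorf characterisations of the three classes, and the reachability moves are essentially the paper's as well (the move $s=1$ produces the value-$0$ follower from $B+1$ and the value-$1$ follower from $AB+1$, since $AB\subseteq B+1$; your value-$0$ follower from $x\in AB+1$, found by a case analysis on the two smallest Zeckendorf indices of $y=x+1\in B$ with $m=z_1$ or $m=z_2-1$, parallels the paper's choice of $i=k-1$ with a special case at $k=2$). Where you genuinely diverge is the ``no same-class follower'' half. The paper handles each class by direct Zeckendorf carry manipulations (showing $b-F_{2i+1}+1\in A$ for $b\in B$, and the analogous statement for $AB+1$), whereas you reduce all three cases to the single statement that $S$ is disjoint from $B$, from $B-B$ and from $AB-AB$, and dispose of it with the Beatty translation property $\lfloor m\alpha\rfloor-\lfloor n\alpha\rfloor\in\{\lfloor(m-n)\alpha\rfloor,\lfloor(m-n)\alpha\rfloor+1\}$ for $\alpha=\phi,\phi^2$, which gives $B-B\subseteq B\cup(B+1)$ and $AB-AB\subseteq AB\cup(AB+2)$; this is then closed off by locating $S$ in the partition ($S\setminus\{1\}\subseteq AB+1$, $1\in B+1$), by $AB+2\subseteq B$ via the telescoping $F_3+F_4+F_6+\cdots+F_{2j}=F_{2j+1}$, and by $1\notin B-B$. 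The trade-off: your route imports a small amount of fractional-part/difference-set machinery but makes the aperiodic Beatty structure do the repetitive work, leaving only two clean Fibonacci identities to check, while the paper stays entirely inside Zeckendorf arithmetic at the cost of a more delicate case-by-case carry analysis. Both arguments are complete; I checked the delicate spots in yours (the minimal gap $1\notin B-B$, the collision $F_{z_2-3}+F_{z_2-2}=F_{z_2-1}$, and the maximal-run telescoping for $AB+2\subseteq B$) and they go through.
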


\begin{proof}
The base case is $g(0)=0$. Suppose that the result holds for all $m<n$. We begin by showing that, if $b\in B$, then no follower of $b$ is in $B$. It suffices to show that $b-F_{2i+1}+1\in A$, which is true if and only if the Zeckendorf representation's smallest term is even indexed. It holds trivially unless $b-F_{2i+1}$ contains the smallest term $F_3$ or $F_2$. If it contains the former, then we compute $F_3+F_2$ and get $F_4$. Unless $F_5$ is contained we are done. Continuing this argument gives the claim in the first case. Suppose, for a contradiction, that $F_2$ is the smallest Zeckendorf term of $b-F_{2i+1}$. If the smallest Zeckendorf term, say $F_{2j+1}$ in $b$ has index greater than or equal to $2i+1$, then $F_2$ is not the smallest term. Hence $j<i$. 

Suppose next that $x\in B+1$. We need to show that there is a follower in $B$, but no follower in $B+1$. Let $b=x-1$. Then $b+1-(F_{2i+1}-1)=b-F_{2i+1}+F_3\in B$ if $i=1$ (which solves the first part). Suppose now, that $x$ has a follower in $B+1$. Then $b+1-(F_{2i+1}-1)\in B+1$, which contradicts an argument in the previous paragraph. 

At last we prove that if $x\in AB+1$ then $x$ has both a follower in $B$ and in $B+1$, but no follower in $AB+1$. We begin with the latter. We want to show that $x-F_{2i+1}+1 \not\in AB+1$. Thus it suffices to show $\alpha = ab+1-F_{2i+1}\not\in AB$ (where $ab = x-1$). The only way to not having the least representative as $F_2=1$ (recall $z_1\geqslant 4$ for numbers in $AB$) is to have $j = 1$ and, for the number $ab$, $z_1 = 4$. But, for $\alpha$, this gives $z_1 = 3$ and hence $\alpha\not\in AB$. Next, we find an $i$ such that $ab+1-(F_{2i+1}-1)\in B+1$, that is such that $ab-F_{2i+1}+1\in B$. Take $i=1$. It suffices to show that $ab-1\in B$. But $F_{2k}-F_2=F_3+\cdots +F_{2k-1}$, for any $k>1$ (with only odd indexes in the sum) and we may assume that $ab$ has least index $2k$. It remains to find an $i$ such that $ab+1-(F_{2i+1}-1)\in B$, that is such that $ab+F_3-F_{2i+1}\in B$. Suppose again that $2k$ is the least index of a Zeckendorf representative in $ab$. Then $k>1$ and, if $k>2$, we may take $i=k-1$ to obtain $F_{2k-2}$ as the least representative in the number $(ab-F_{2i+1})$. If $2k-2\geqslant 4$ we are done. At last, assume that $k=2$. Then the smallest representative in the number $ab+F_3$ is odd indexed say $2j+1\geqslant 5$. We can take $i=j+1$, unless $ab+F_3=F_{2j+1}$, in which case we take $i=j$ and the option is 0.
\end{proof}


\begin{thebibliography}{NOORS}
\bibitem{B} E. R. Berlekamp, J. H. Conway, R. K. Guy, \emph{Winning Ways}, {\bf 1-2} Academic Press, London (1982). Second edition, {\bf 1-4}. A. K. Peters, Wellesley/MA (2001/03/03/04).
\bibitem{F} N. Fox, http://arxiv.org/abs/1407.2823 
\bibitem{G} P.\ M.\ Grundy. ``Mathematics and Games'', \emph{Eureka}, 1939.
\bibitem{L} Lothaire M., Berstel J., S\'{e}\'{e}bold P., Algebraic combinatorics on words, http://www-igm.univ-mlv.fr/\%7Eberstel/Lothaire/ChapitresACW/C2.ps.
\bibitem{Sa} C.\ Santos. \emph{Some Notes on Impartial Games and Nim Dimension}, PhD Thesis, University of Lisbon, 2010.
\bibitem{S} R. Silber, A Fibonacci Property of Wythoff Pairs, \textit{The Fibonacci Quarterly}, Vol. 14, No. 4 (1976), 380--384
\bibitem{Sp} R.\ P.\ Sprague. ``\"Uber mathematische Kampfspiele'', \emph{Tohoku Mathematical Journal}, 1935.\
\bibitem{W} W. A. Wythoff, A modification of the game of Nim, \emph{Nieuw Arch. Wisk.} {\bf 7} (1907) 199-202.
\end{thebibliography}
\end{document}